\documentclass[pdflatex,sn-mathphys-ay]{sn-jnl}

\usepackage{graphicx}
\usepackage{multirow}
\usepackage{amsmath,amssymb,amsfonts}
\usepackage{amsthm}
\usepackage{mathrsfs}
\usepackage[title]{appendix}
\usepackage{xcolor}
\usepackage{textcomp}
\usepackage{manyfoot}
\usepackage{booktabs}
\usepackage{algorithm}
\usepackage{algorithmicx}
\usepackage{algpseudocode}
\usepackage{listings}
\usepackage{mathtools}
\usepackage{bm}

\theoremstyle{thmstyleone}
\newtheorem{theorem}{Theorem}
\newtheorem{proposition}[theorem]{Proposition}

\theoremstyle{thmstyletwo}

\theoremstyle{thmstylethree}
\newtheorem{remark}[theorem]{Remark}

\newcommand{\E}{\mathbb{E}}
\newcommand{\Pp}{\mathbb{P}}

\newcommand{\ind}{\mathbf{1}}

\begin{document}

\title[Stochastic Epidemics and Heterogeneous SIR Equations]
{From Individual-Based Stochastic Epidemics to Heterogeneous SIR Equations}

% Replace the placeholder names, affiliations, and email addresses below.
\author[1]{\fnm{Olga} \sur{Izyumtseva}}
%\email{Olga.Iziumtseva1@nottingham.ac.uk}

\author[1]{\fnm{Wasiur R.} \sur{KhudaBukhsh}}
% \email{author.email@example.edu}

\author[2,3]{\fnm{M. Gabriela M.} \sur{Gomes}}
% \email{author.email@example.edu}

\author*[4]{\fnm{Grzegorz A.} \sur{Rempala}}
\email{rempala.3@osu.edu}

\affil[1]{\orgdiv{School of Mathematical Sciences},
\orgname{University of Nottingham},
\orgaddress{\city{Nottingham}, \country{United Kingdom}}}

\affil[2]{\orgdiv{Department of Mathematics and Statistics},
\orgname{University of Strathclyde},
\orgaddress{\city{Glasgow}, \country{United Kingdom}}}

\affil[3]{\orgdiv{Centre for Mathematics and Applications},
\orgname{NOVA School of Science and Technology},
\orgaddress{\city{Caparica}, \country{Portugal}}}

\affil*[4]{\orgdiv{Division of Biostatistics},
\orgname{The Ohio State University},
\orgaddress{\city{Columbus}, \state{OH}, \country{USA}}}

\abstract{
We develop a stochastic framework for a broad class of
heterogeneous SIR epidemic models. In the finite-population construction, each
initially susceptible individual is assigned a fixed nonnegative
susceptibility, and infection occurs when the accumulated population-level
infection pressure exceeds an individual random threshold. Infectious periods
are independent and exponentially distributed with a common recovery rate.
For any susceptibility distribution with finite mean, we prove a
uniform-on-compact law of large numbers for the susceptible, infectious,
removed, and cumulative infection-pressure processes. In the limit, an
individual with susceptibility \(\lambda\) remains susceptible under cumulative
pressure \(a\) with probability \(\exp(-a\lambda)\). It follows that the
susceptible fraction is given by the Laplace transform of the initial
susceptibility distribution, while the incidence rate is governed by the mean
susceptibility among those who remain susceptible.
The resulting limits recover several familiar heterogeneous SIR systems,
including the classical power-law model, and also yield other closed nonlinear
incidence forms. The framework therefore provides a unified probabilistic
foundation for deterministic epidemic models with persistent individual
heterogeneity.
}

\keywords{Heterogeneous SIR model; Sellke construction; susceptibility distribution; fluid limit; Laplace transform; nonlinear incidence; frailty}

\maketitle
\tableofcontents
\section{Introduction}\label{sec:intro}

Individual heterogeneity is one of the principal mechanisms by which real
epidemics depart from the classical homogeneous SIR model. Individuals may
differ persistently in biological susceptibility, prior immunity, contact
activity, exposure opportunities, behavior, or other traits that alter their
risk of infection. Such heterogeneity does more than change an average
transmission coefficient. Individuals with larger risk are preferentially
infected early, so the susceptible population is progressively enriched for
individuals with smaller risk. This selective-depletion mechanism \citep{VaupelYashin1985} can alter
epidemic growth, prevalence, final size, and herd-immunity thresholds
\citep{Dwyer1991,Novozhilov2008,Miller2014,
GomesEtAl2022,MontalbanEtAl2022}.

The  effects of heterogeneity  have been studied through two complementary mathematical approaches. The first
starts from deterministic compartmental equations with a continuously
distributed individual trait. The susceptible population is represented by a
density over trait space, and the aggregate epidemic dynamics are obtained by
integrating over that density.  For instance, nearly two decades ago, Novozhilov derived low-dimensional reductions and final-size relations for heterogeneous SIR models and showed that Gamma-distributed susceptibility gives rise to a power-law incidence function~\citep{Novozhilov2008}. During the COVID-19 pandemic, Gomes and collaborators further developed this perspective and emphasized the effects of persistent susceptibility and connectivity heterogeneity on epidemic dynamics and herd-immunity thresholds~\citep{GomesEtAl2022}. More recently, Diekmann and Inaba introduced a systematic procedure for incorporating separable static heterogeneity into compartmental epidemic models, thereby unifying a broad class of trait-structured formulations~\citep{DiekmannInaba2023}. Related studies have examined the effects of susceptibility variance, frailty, contact classes, and dynamically varying social activity~\citep{MargheriEtAl2015,Szapudi2020,TkachenkoEtAl2021}.

The second approach relies on an individual-level stochastic description of
the epidemic process. In the classical  Sellke construction, each initially susceptible
individual is assigned a random resistance threshold and becomes infected when
the cumulative population infection pressure exceeds that threshold
\citep{Sellke1983}. By separating individual resistance from the common force
of infection, the construction provides a particularly convenient framework
for large-population asymptotics. Persistent individual heterogeneity can be incorporated into the Sellke construction by allowing the resistance distribution to depend on an individual-specific risk or activity characteristic; related constructions have been developed for discrete heterogeneous contact rates, for instance,  in \cite{House2014}. Here we consider a susceptibility-specific formulation in which each individual is assigned a fixed, nonnegative susceptibility characteristic that determines the distribution of the resistance threshold. Unlike contact-activity models, in which heterogeneity may influence both acquisition and transmission, the present framework restricts heterogeneity to susceptibility. This yields a
natural stochastic model of heterogeneous transmission risk. As already indicated, among the
best-studied explicit deterministic reductions is the one based on the Gamma family, whose
Laplace transform gives rise to a power-law incidence function
\citep{Novozhilov2008,GomesEtAl2022,DiekmannInaba2023}.

Against this background, the purpose of this paper is to establish the connection between the heterogeneous Sellke construction and deterministic heterogeneous SIR models for a broad class of 
susceptibility distributions. The latter are typically formulated directly at the population level, whereas the stochastic construction considered here begins with a finite population 
carrying fixed individual susceptibility characteristics or marks. We show that, under mild moment conditions, the corresponding epidemic process converges in a law-of-large-numbers sense to deterministic 
heterogeneous SIR dynamics whose structure is governed by the Laplace transform of the susceptibility distribution. In this way, 
the generalized Sellke construction provides an individual-level probabilistic foundation for a broad class of heterogeneous SIR models.

Let \(F\) denote the law of a nonnegative
susceptibility mark \(\Lambda\), and let 
\[
G(a)=\int_{[0,\infty)}e^{-a\lambda}\,F(d\lambda)
     =\E[e^{-a\Lambda}],
\] denote  its  Laplace transform. Then  the limiting susceptible proportion satisfies
\[
s(t)=s_0G(A(t)),
\qquad
A(t)=\tau\int_0^t\iota(u)\,du,
\]
where \(A(t)\) is cumulative infection pressure and  $\iota(t)$ is  the limiting  infected proportion. The corresponding incidence
coefficient is
\[
\overline{\lambda}(A(t))
=
-\frac{G'(A(t))}{G(A(t))}
=
\frac{\E[\Lambda e^{-A(t)\Lambda}]}
     {\E[e^{-A(t)\Lambda}]},
\]
the mean susceptibility under the exponentially tilted distribution of marks
among individuals who remain susceptible. Thus, the deterministic limit does
not replace \(\Lambda\) by its initial mean. It retains the evolving
composition of the susceptible population.

This representation clarifies both the probabilistic mechanism and the
question of analytic tractability. The epidemic always admits a 
description solely in terms of cumulative pressure. A closed system in the usual
compartment variables is available whenever the relevant transform relations
can be inverted or represented by finitely many auxiliary variables. The existence of an exact finite-dimensional reduction is therefore governed by the analytic structure of the Laplace transform, rather than by any particular susceptibility distribution, including the Gamma family.

The main contributions of this paper are threefold. First, for an arbitrary
nonnegative susceptibility distribution with finite mean, we prove
uniform-on-compact convergence of the finite  epidemic with heterogeneous susceptibility to a
deterministic limit. Second, we characterize this limit by deriving the
incidence function, initial-growth criterion, reproduction number, final-size
equation, and the selection identity (Proposition~\ref{prop:selection}), which
describes how the average susceptibility of the remaining susceptible
population changes over the course of the epidemic. Third, we show that the
limiting equations naturally recover and extend classical deterministic models
of heterogeneous susceptibility. They reproduce the familiar power-law
incidence associated with Gamma-distributed susceptibilities, provide a simple
probabilistic interpretation of dynamic selection for general susceptibility
distributions, and reveal additional susceptibility families that lead to
finite-dimensional ODE systems.

The examples that follow are intended to illustrate this general framework
rather than simply catalogue alternative incidence functions. Each
susceptibility distribution corresponds to a familiar modeling paradigm.
Gamma distributions recover the classical continuous heterogeneous-SIR model,
while finite discrete distributions provide an exact stochastic foundation for
multigroup and risk-class models. Poisson and negative-binomial distributions
naturally describe latent counts of exposure opportunities or risk factors,
including the possibility of a completely resistant subpopulation, whereas the
inverse-Gaussian distribution provides a continuous non-Gamma example with an
explicit finite-dimensional reduction. Together, these examples show that
tractable heterogeneous epidemic models can be generated systematically from
susceptibility distributions whose Laplace transforms possess suitable
differential or algebraic structure.

Throughout the paper, heterogeneity is intrinsic rather than environmental:
each individual is assigned a susceptibility mark at the beginning of the
epidemic, and this mark remains fixed until infection. The susceptibility
distribution therefore evolves only through the preferential infection of
high-risk individuals. This dynamic-selection mechanism is fundamentally
different from models in which transmission rates fluctuate randomly over
time, representing environmental variability, even when the same marginal
distribution is used. %\cite{VaupelYashin1985,Gomes2025}.
We also restrict
heterogeneity to susceptibility and assume exponentially distributed
infectious periods. Allowing more general infectious-period distributions
naturally leads to infection-age, renewal, Volterra, or phase-type models
\citep{FengThieme2000,ForienPangPardoux2021}, while correlated susceptibility
and infectiousness would require joint marked populations. These extensions
lie beyond the scope of the present paper.

The remainder of the paper is organized as follows.
Section~\ref{sec:model} defines the finite heterogeneous-susceptibility epidemic, which we refer to as the mixed-Sellke model, and introduces the Laplace-transform representation of the resistance distribution.
Section~\ref{sec:limit} states the fluid-limit theorem and develops its
deterministic consequences, including selection, reproduction numbers, and
final size. Section~\ref{sec:examples} presents the five representative
susceptibility families and discusses their relation to existing literature
and applications. 
 Section~\ref{sec:discussion} summarizes the scope and limitations of
the framework.  The proof of the
general fluid-limit theorem is provided in the Appendix.

\section{Finite-population model}\label{sec:model}

We first specify the finite stochastic epidemic on a common probability
space. The construction makes explicit the two independent mechanisms in the
model: persistent heterogeneity in susceptibility and homogeneous exponential
recovery. This formulation also yields exact identities that will later reduce
the proof to the analysis of a one-dimensional  integral equation. 

For population size \(N\), let { $S_N(t), I_N(t),$ and $R_N(t)$ denote the numbers of susceptible, infected and removed individuals at time $t$, respectively, so that }
\[
S_N(t)+I_N(t)+R_N(t)=N
\]
and define the corresponding proportions
\[
s_N(t)=\frac{S_N(t)}{N},\qquad
\iota_N(t)=\frac{I_N(t)}{N},\qquad
r_N(t)=\frac{R_N(t)}{N}.
\]

Assume
\[
\bigl(s_N(0),\iota_N(0),r_N(0)\bigr)
\xlongrightarrow{\Pp}
(s_0,\iota_0,r_0),
\]
where
\[
s_0+\iota_0+r_0=1,\qquad s_0>0,\qquad \iota_0>0.
\]

For each of the $S_N(0)$ initially susceptible individuals, assign a nonnegative
susceptibility mark $\Lambda_j$, $j=1,\ldots,S_N(0)$. The marks are independent
and identically distributed with common law $\mu$ on $[0,\infty)$ and are
independent of the initial compartment counts:
\[
\Lambda_j \stackrel{\mathrm{iid}}{\sim} \mu,
\qquad j=1,\ldots,S_N(0).
\]
Let $\Lambda$ denote a generic random variable with law $\mu$, and assume
\[
m_1:=\E[\Lambda]<\infty.
\]

Conditionally on \(\Lambda_j\), assign an independent Sellke resistance
threshold
\[
Q_j\mid \Lambda_j\sim \operatorname{Exp}(\Lambda_j),
\]
so that
\[
\Pp(Q_j>a\mid \Lambda_j=\lambda)=e^{-\lambda a},
\qquad a\geq 0.
\]

Define the cumulative infection pressure by
\[
A_N(t)=\tau\int_0^t\iota_N(u)\,du,
\qquad \tau>0.
\]
An initially susceptible individual remains susceptible at time \(t\) exactly
when
\[
Q_j>A_N(t).
\]

Each infected individual has an independent infectious period distributed as
\(\operatorname{Exp}(\gamma)\), where \(\gamma>0\). Equivalently, the recovery
process has the random time-change representation (see, e.g., \cite{AndersonKurtz2015})
\[
R_N(t)
=
R_N(0)+
Y_R\left(
N\gamma\int_0^t\iota_N(u)\,du
\right),
\]
where \(Y_R\) is a unit-rate Poisson process independent of the susceptibility
marks, resistance thresholds  and  initial compartment counts. Note that the unit-rate Poisson clock
is evaluated at its integrated stochastic intensity
\(N\gamma\int_0^t\iota_N(u)\,du\)
\citep{Kurtz1980,AndersonKurtz2015}.

\subsection{Resistance distribution and Laplace transform}

The unconditional resistance law is obtained by mixing exponential
thresholds over the susceptibility distribution. Its survival function is
therefore the Laplace transform of that distribution. This observation is the
main analytic bridge between the individual-level marks and the deterministic
population limit.

Define the Laplace transform
\begin{equation}\label{eq:Ga}
G(a)
=
\E[e^{-a\Lambda}]
=
\int_{[0,\infty)} e^{-a\lambda}\,\mu(d\lambda).
\end{equation}
Since \(\E[\Lambda]<\infty\), differentiation under the expectation is justified
by the dominated convergence theorem, yielding
\[
G'(a)
=
-\E[\Lambda e^{-a\Lambda}].
\]
Therefore,
\[
|G'(a)|
\leq
\E[\Lambda]
=
m_1.
\]
Hence, \(G\) is globally Lipschitz on \([0,\infty)\).

\section{Deterministic limit and structural reduction}\label{sec:limit}

The theorem below states that, on every bounded time interval, the finite epidemic is asymptotically described by a deterministic trajectory. In the terminology of stochastic population processes and chemical reaction networks, such a law-of-large-numbers limit for the population-scaled process is commonly called a \emph{fluid limit}: as the population size tends to infinity, the  population-scaled stochastic processes converge uniformly on compact time intervals to a deterministic trajectory, as described, for instance, in  \cite{EthierKurtz1986, AndersonKurtz2015}. A useful feature of the Sellke representation is that the entire fluid limit can be characterized through a single
equation for the cumulative infection pressure $A$; the compartment proportions are then recovered algebraically. Throughout, $\left\|\cdot\right\|$  denotes any norm on the relevant finite-dimensional Euclidean space.

\begin{theorem}[General mixed-Sellke fluid limit]\label{thm:1}
Under the stochastic construction and notation of Section~\ref{sec:model}, for every finite \(T>0\),
\[
\sup_{0\leq t\leq T}
\left\|
\bigl(s_N(t),\iota_N(t),r_N(t),A_N(t)\bigr)
-
\bigl(s(t),\iota(t),r(t),A(t)\bigr)
\right\|
\xlongrightarrow{\Pp}0
\]
as $N\to\infty.$

The deterministic limit is characterized by the  initial-value problem
\[
\dot A(t)
=
\tau\bigl[1-r_0-s_0G(A(t))\bigr]
-\gamma A(t),
\qquad
A(0)=0,
\]
together with
\[
s(t)=s_0G(A(t)),
\]
\[
r(t)=r_0+\frac{\gamma}{\tau}A(t),
\]
and
\[
\iota(t)=1-s(t)-r(t).
\]

Equivalently,
\[
A(t)=\tau\int_0^t\iota(u)\,du.
\]
\end{theorem}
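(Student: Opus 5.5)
The plan is to reduce everything to a one-dimensional integral equation for the cumulative pressure \(A_N\), to show that this equation is a perturbation of the deterministic one by an error that is uniformly small on \([0,T]\), and to close with Gronwall's inequality using that \(G\) is globally Lipschitz with constant \(m_1\).

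\textbf{Exact finite-\(N\) identities.} By the Sellke construction,
\[
s_N(t)=\frac1N\sum_{j=1}^{S_N(0)}\ind\{Q_j>A_N(t)\},
\]
and the time-change representation gives
\[
r_N(t)=r_N(0)+\frac1N Y_R\bigl(N\tfrac{\gamma}{\tau}A_N(t)\bigr),
\]
because \(N\gamma\int_0^t\iota_N=N\tfrac{\gamma}{\tau}A_N(t)\). Since \(\iota_N=1-s_N-r_N\) and \(A_N(t)=\tau\int_0^t\iota_N(u)\,du\), we obtain
\[
A_N(t)=\tau\int_0^t\Bigl[1-r_N(0)-\tfrac1N Y_R\bigl(N\tfrac{\gamma}{\tau}A_N(u)\bigr)-\tfrac1N\textstyle\sum_j\ind\{Q_j>A_N(u)\}\Bigr]\,du .
\]
This step needs the fact that \(A_N\) is nondecreasing and bounded by \(\tau t\le\tau T\), which holds because \(0\le\iota_N\le1\).

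\textbf{Uniform error bounds.} Define the empirical survival function
\[
\widehat G_N(a)=\frac1{S_N(0)}\sum_j\ind\{Q_j>a\}.
\]
Conditionally on \(S_N(0)\), the thresholds \(Q_j\) are i.i.d.\ with survival function \(G\), since \(\Pp(Q_j>a)=\E[e^{-a\Lambda}]\). Glivenko--Cantelli therefore gives \(\sup_{a\ge0}|\widehat G_N(a)-G(a)|\to0\) a.s.\ on the event \(S_N(0)\to\infty\), which has probability tending to one because \(s_0>0\). Combined with \(S_N(0)/N\to s_0\), this yields
\[
\varepsilon_N^{(1)}:=\sup_a\Bigl|\tfrac1N\textstyle\sum_j\ind\{Q_j>a\}-s_0G(a)\Bigr|\xlongrightarrow{\Pp}0 .
\]
The point of this is that the bound holds uniformly in \(a\), so the randomness of \(A_N\) and its dependence on the \(Q_j\) does no harm. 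For the recovery term, the functional LLN for Poisson processes gives
\[
\varepsilon_N^{(2)}:=\sup_{0\le x\le \gamma T}\bigl|N^{-1}Y_R(Nx)-x\bigr|\to0 \quad\text{a.s.}
\]
Both bounds are again uniform in the argument, so no independence between \(Y_R\) and \(A_N\) is needed. Adding \(\varepsilon_N^{(0)}=|r_N(0)-r_0|\), we get
\[
A_N(t)=\tau\int_0^t\bigl[1-r_0-s_0G(A_N(u))\bigr]\,du-\gamma\int_0^tA_N(u)\,du+E_N(t),\qquad \sup_{t\le T}|E_N(t)|\le \tau T\bigl(\varepsilon_N^{(0)}+\varepsilon_N^{(1)}+\varepsilon_N^{(2)}\bigr).
\]

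\textbf{Deterministic problem, Gronwall, and recovery of the compartments.} The right-hand side \(f(A)=\tau[1-r_0-s_0G(A)]-\gamma A\) is globally Lipschitz with constant \(\tau s_0m_1+\gamma\), so the initial-value problem has a unique global solution \(A\). Subtracting its integral form from the identity above gives
\[
|A_N(t)-A(t)|\le (\tau s_0m_1+\gamma)\int_0^t|A_N-A|\,du+\sup_{t\le T}|E_N(t)|,
\]
and Gronwall yields \(\sup_{t\le T}|A_N-A|\le e^{(\tau s_0m_1+\gamma)T}\sup_{t\le T}|E_N(t)|\xlongrightarrow{\Pp}0\).

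The compartment proportions then follow from the same uniform bounds. We have \(|s_N-s_0G(A)|\le\varepsilon^{(1)}_N+s_0m_1|A_N-A|\) and \(|r_N-r_0-\tfrac{\gamma}{\tau}A|\le\varepsilon_N^{(0)}+\varepsilon^{(2)}_N+\tfrac{\gamma}{\tau}|A_N-A|\), and \(\iota_N=1-s_N-r_N\) gives the third.

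For the equivalence \(A=\tau\int_0^t\iota\), note that with \(\iota=1-s-r\) the ODE reads \(\dot A=\tau\iota\). Because the convergence is uniform, the limit of \(\tau\int_0^t\iota_N\) is \(\tau\int_0^t\iota\), and the equivalence follows.

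The main obstacle I anticipate is the uniform control of the susceptible term evaluated at the random, sample-dependent argument \(A_N(u)\). The plan handles it through Glivenko--Cantelli applied conditionally on the random count \(S_N(0)\). The delicate points are this conditioning and the use of the mixture survival function \(G\) rather than any single exponential.
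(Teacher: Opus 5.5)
Your proposal is correct and follows essentially the same route as the paper's proof. Both use the exact identity $s_N(t)=s_N(0)G_N(A_N(t))$ with Glivenko--Cantelli applied uniformly in the pressure level, the functional LLN for the recovery Poisson process giving $r_N=r_N(0)+\frac{\gamma}{\tau}A_N+o_{\Pp}(1)$, a perturbed integral equation for $A_N$ with drift $\tau[1-r_0-s_0G(a)]-\gamma a$, and Gr\"onwall with Lipschitz constant $\tau s_0 m_1+\gamma$; your explicit use of $A_N(t)\le\tau T$ to make the Poisson error uniform over $[0,\gamma T]$ is only a slightly more careful version of the paper's $\sup_t|M_N(t)|\to0$ step.
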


The proof of the general fluid-limit theorem is deferred to Appendix~\ref{app:general-proof}.

\subsection{Differential equation representation}

The one-dimensional  formulation is convenient for the proof, but the epidemiological
meaning of the limit is clearest in compartmental differential-equation form. The incidence
coefficient is not the initial mean susceptibility. It is the mean
susceptibility under the distribution of marks among individuals who have
survived the accumulated infection pressure.

Recall from Section~\ref{sec:intro}  the exponentially tilted mean susceptibility
\[
\overline{\lambda}(a)
=
\frac{\E[\Lambda e^{-a\Lambda}]}
{\E[e^{-a\Lambda}]}
=
-\frac{G'(a)}{G(a)}.
\]
Since \(G(a)>0\) for every finite \(a\geq 0\), this quantity is well defined.

The limiting epidemic satisfies
\[
\dot s(t)
=
-\tau\overline{\lambda}(A(t))s(t)\iota(t),
\]
\[
\dot\iota(t)
=
\tau\overline{\lambda}(A(t))s(t)\iota(t)
-\gamma\iota(t),
\]
\[
\dot r(t)=\gamma\iota(t),
\]
and
\[
\dot A(t)=\tau\iota(t),
\]
with
\[
(s(0),\iota(0),r(0),A(0))
=
(s_0,\iota_0,r_0,0).
\]

Indeed,
\[
\begin{aligned}
\dot s(t)
&=
s_0G'(A(t))\dot A(t)\\
&=
-s_0\E[\Lambda e^{-A(t)\Lambda}]
\,\tau\iota(t)\\
&=
-\tau
\frac{\E[\Lambda e^{-A(t)\Lambda}]}
{\E[e^{-A(t)\Lambda}]}
s(t)\iota(t).
\end{aligned}
\]

The tilted susceptibility distribution among individuals still susceptible
after cumulative pressure \(a\) is
\begin{equation}\label{eq:mu_a}
\mu_a(d\lambda)
=
\frac{e^{-a\lambda}\mu(d\lambda)}{G(a)}.
\end{equation}
Accordingly,
\begin{equation}\label{eq:la_a}
\overline{\lambda}(a)=\E_a[\Lambda], 
\end{equation}
where $\E_a$ is expectation with respect to the measure $\mu_a$. 

\subsection{Monotonicity of effective susceptibility}\label{sec:effective}

Persistent susceptibility heterogeneity induces selection within the
susceptible compartment. The following identity is the basic quantitative
statement used in the later interpretation.

\begin{proposition}[Selection identity]\label{prop:selection}
Assume \(\E[\Lambda^2]<\infty\). For every \(a\geq0\) with \(G(a)>0\),
\[
\overline{\lambda}'(a)=-\operatorname{Var}_a(\Lambda)\leq0.
\]
Hence, the effective susceptibility is nonincreasing in cumulative infection
pressure, and is strictly decreasing whenever the tilted law \(\mu_a\) is
nondegenerate.
\end{proposition}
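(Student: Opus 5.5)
The plan is to differentiate the representation \(\overline{\lambda}(a)=-G'(a)/G(a)\) directly, after first justifying that \(G\) is twice differentiable with \(G''(a)=\E[\Lambda^2e^{-a\Lambda}]\).

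\emph{Second derivative of \(G\).} Section~\ref{sec:model} already gives \(G'(a)=-\E[\Lambda e^{-a\Lambda}]\) via dominated convergence under \(\E[\Lambda]<\infty\). The same argument is repeated one level higher. For \(a\ge0\) and \(|h|\) small with \(a+h\ge0\), the mean value theorem gives
\[
\Bigl|\frac{\Lambda e^{-(a+h)\Lambda}-\Lambda e^{-a\Lambda}}{h}\Bigr|\le \Lambda^2 e^{-\min(a,a+h)\Lambda}\le\Lambda^2 .
\]
This dominating function is integrable because \(\E[\Lambda^2]<\infty\). Dominated convergence therefore yields \(G''(a)=\E[\Lambda^2 e^{-a\Lambda}]\), with the derivative understood one-sidedly at \(a=0\). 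This is the only step requiring care, namely the one-sided derivative at the boundary and the choice of dominating function, and the second-moment assumption is exactly what makes it work. It is also the main obstacle, and it is mild.

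\emph{Quotient rule and identification as a variance.} Since \(G(a)>0\) and \(G\) is \(C^1\) with a differentiable derivative, the quotient rule gives
\[
\overline{\lambda}'(a)=-\frac{G''(a)}{G(a)}+\Bigl(\frac{G'(a)}{G(a)}\Bigr)^2
=-\frac{\E[\Lambda^2e^{-a\Lambda}]}{\E[e^{-a\Lambda}]}+\Bigl(\frac{\E[\Lambda e^{-a\Lambda}]}{\E[e^{-a\Lambda}]}\Bigr)^2 .
\]
By the definition \eqref{eq:mu_a} of the tilted law \(\mu_a\), the first ratio equals \(\E_a[\Lambda^2]\), and by \eqref{eq:la_a} the second equals \(\E_a[\Lambda]^2\). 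Both are finite, since \(e^{-a\lambda}\le1\) gives \(\E_a[\Lambda^2]\le\E[\Lambda^2]/G(a)\). Hence \(\overline{\lambda}'(a)=-(\E_a[\Lambda^2]-\E_a[\Lambda]^2)=-\operatorname{Var}_a(\Lambda)\le0\).

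\emph{Monotonicity consequences.} Nonpositivity of the derivative gives that \(\overline{\lambda}\) is nonincreasing. For strictness, \(\operatorname{Var}_a(\Lambda)>0\) exactly when \(\mu_a\) is nondegenerate. Since \(\mu_a\) and \(\mu\) are mutually absolutely continuous (the density \(e^{-a\lambda}/G(a)\) is strictly positive), \(\mu_a\) is nondegenerate at one \(a\) if and only if it is nondegenerate at all \(a\), which is equivalent to \(\mu\) being nondegenerate. In that case \(\overline{\lambda}'<0\) everywhere, and \(\overline{\lambda}\) is strictly decreasing.
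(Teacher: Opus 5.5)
Your proof is correct and takes the same route as the paper: apply the quotient rule to $\overline{\lambda}=-G'/G$ and identify $G''/G$ and $G'/G$ as moments under the tilted law $\mu_a$. You also supply two steps the paper leaves implicit: the dominated-convergence justification of $G''(a)=\E[\Lambda^2e^{-a\Lambda}]$ (where the second-moment assumption is really needed only at $a=0$), and the mutual-absolute-continuity argument showing that nondegeneracy of $\mu_a$ does not depend on $a$, which is what makes the strict-decrease conclusion global.
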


\begin{proof}
Since \(\overline{\lambda}(a)=-G'(a)/G(a)\),
\[
\overline{\lambda}'(a)
=-\frac{G''(a)G(a)-G'(a)^2}{G(a)^2}
=-\left(\E_a[\Lambda^2]-\E_a[\Lambda]^2\right).
\]
\end{proof}

\subsection{Closed system in the compartment variables}

Although the cumulative pressure \(A\) has a direct Sellke interpretation, it
can be eliminated whenever the Laplace transform is invertible on the relevant
range. This gives a closed three-dimensional system in the standard
compartment variables, with a generally nonlinear incidence coefficient.

Let
\( p_0 =\Pr(\Lambda=0).\)
If \(p_0<1\), then \(G\) is strictly decreasing from \(1\) to \(p_0\) and hence admits an inverse on \((p_0,1]\).
Since according to Theorem~\ref{thm:1}
\[
\frac{s(t)}{s_0}=G(A(t)),
\]
we may write
\[
A(t)=G^{-1}\left(\frac{s(t)}{s_0}\right).
\]

Define
\[
\psi(x)
=
-\frac{
G'\!\left(G^{-1}(x)\right)
}{x},
\qquad p_0<x\leq 1.
\]
The limiting dynamics can then be represented as the closed system
\[
\dot s
=
-\tau
\psi\left(\frac{s}{s_0}\right)s\iota,
\]
\[
\dot\iota
=
\tau
\psi\left(\frac{s}{s_0}\right)s\iota
-\gamma\iota,
\]
\[
\dot r=\gamma\iota.
\]

Equivalently,
\[
\psi\left(\frac{s}{s_0}\right)
=
\overline{\lambda}\left(
G^{-1}\left(\frac{s}{s_0}\right)
\right).
\]
For a general susceptibility distribution, this function need not have an
elementary closed form. Section~\ref{sec:examples} presents several
susceptibility families for which explicit formulas and finite-dimensional
reductions are available.

\subsection{Initial growth,  reproduction number and final size }

At the start of the epidemic no selection has yet occurred, so the effective
susceptibility equals the ordinary population mean. The initial growth
criterion therefore depends only on the first moment of the susceptibility
distribution, even though later dynamics depend on its full Laplace transform.
At \(t=0\),
$A(0)=0,G(0)=1,$
and therefore
$
\overline{\lambda}(0)=\E[\Lambda].
$
It follows that
\[
\dot\iota(0)
=
\left(
\tau s_0\E[\Lambda]-\gamma
\right)\iota_0.
\]

The initial effective reproduction number is
\[
\mathcal R_{\mathrm{eff}}(0)
=
\frac{\tau s_0\E[\Lambda]}{\gamma}.
\]
When \(s_0\) is close to one, the corresponding basic reproduction number is
\[
\mathcal R_0
=
\frac{\tau\E[\Lambda]}{\gamma}.
\]

%\subsection{Final-size relation}

The terminal epidemic size can also be expressed through the Laplace transform.
This relation generalizes the standard homogeneous final-size equation and
shows precisely where the full susceptibility distribution enters the
eventual depletion of susceptibles.

Let
\[
A_\infty=\lim_{t\to\infty}A(t).
\]
Since \(\iota(t)\to 0\) as \(t\to\infty \), the limiting cumulative infection pressure satisfies
\[
1-r_0-s_0G(A_\infty)
-\frac{\gamma}{\tau}A_\infty
=0.
\]
Equivalently,
\[
A_\infty
=
\frac{\tau}{\gamma}
\left[
1-r_0-s_0G(A_\infty)
\right].
\]

The limiting susceptible and removed proportions are
\[
s_\infty=s_0G(A_\infty)
\]
and
\[
r_\infty=r_0+\frac{\gamma}{\tau}A_\infty.
\]
Thus, the final epidemic size depends on the susceptibility distribution only
through its Laplace transform \(G\).
 This representation also yields a
simple comparison principle: among susceptibility distributions with a fixed
mean, the homogeneous population produces the largest epidemic final size.

\begin{proposition}[Final-size reduction through susceptibility heterogeneity]
\label{thm:final_size}
Assume that
\[
\E[\Lambda]=1,
\]
and compare the heterogeneous epidemic with the homogeneous model
\(\Lambda\equiv 1\), keeping \(\tau\), \(\gamma\), and the initial condition
\((s_0,\iota_0,r_0)\) fixed. Let \(A_\infty\) and \(r_\infty\) denote the
limiting cumulative infection pressure and removed proportion in the
heterogeneous model, and let \(A_\infty^{\rm hom}\) and
\(r_\infty^{\rm hom}\) denote the corresponding quantities in the
homogeneous model. Then
\[
A_\infty\leq A_\infty^{\rm hom},
\qquad
r_\infty\leq r_\infty^{\rm hom}.
\]
If the law of \(\Lambda\) is nondegenerate and \(\iota_0>0\), the inequalities
are strict.
\end{proposition}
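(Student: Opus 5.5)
The plan is to compare the two final-size equations via Jensen's inequality, $G(a)=\E[e^{-a\Lambda}]\geq e^{-a\E[\Lambda]}=e^{-a}=G_{\rm hom}(a)$, and then a monotonicity argument for the scalar function whose zero defines $A_\infty$. For a transform $H\in\{G,G_{\rm hom}\}$ define
\[
\Phi_H(a)=\tau\bigl[1-r_0-s_0H(a)\bigr]-\gamma a ,
\]
so that, by Theorem~\ref{thm:1}, $\dot A=\Phi_H(A)$ with $A(0)=0$, and $A_\infty$ is the limit of this scalar autonomous ODE. Note $\Phi_H(0)=\tau\iota_0>0$. Since $\dot A=\tau\iota\geq0$ and $A$ is bounded (because $\iota\le 1-r_0-\frac{\gamma}{\tau}A$ forces $A\leq \frac{\tau}{\gamma}(1-r_0)$), $A(t)$ increases to $A_\infty$, which must satisfy $\Phi_H(A_\infty)=0$. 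Moreover, $A_\infty$ is the \emph{smallest} positive zero of $\Phi_H$: a one-dimensional autonomous trajectory starting where $\Phi_H>0$ cannot cross a zero of $\Phi_H$ (uniqueness holds since $H$ is Lipschitz). Hence $\Phi_H>0$ on $[0,A_\infty)$. I will state this as the first step, since the final-size equation above only says $A_\infty$ is \emph{a} root.

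Second step: Jensen gives $G(a)\geq e^{-a}$ for all $a\ge0$, hence $\Phi_G(a)\leq\Phi_{\rm hom}(a)$. Evaluate at $a=A_\infty^{\rm hom}$: $\Phi_G(A_\infty^{\rm hom})\leq\Phi_{\rm hom}(A_\infty^{\rm hom})=0$. Since $\Phi_G(0)>0$ and $\Phi_G$ is continuous, $\Phi_G$ has a zero in $(0,A_\infty^{\rm hom}]$, and the smallest positive zero $A_\infty$ therefore satisfies $A_\infty\leq A_\infty^{\rm hom}$. Because $r_\infty=r_0+\frac{\gamma}{\tau}A_\infty$ is increasing in $A_\infty$ with the same affine map in both models, $r_\infty\le r_\infty^{\rm hom}$ follows immediately.

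Third step (strictness): if $\Lambda$ is nondegenerate, Jensen is strict for every $a>0$ because $x\mapsto e^{-ax}$ is strictly convex. Then $\Phi_G(A_\infty^{\rm hom})<0$, using $A_\infty^{\rm hom}>0$, which holds since $\Phi_{\rm hom}(0)=\tau\iota_0>0$. By continuity, the first zero of $\Phi_G$ lies strictly below $A_\infty^{\rm hom}$, so $A_\infty<A_\infty^{\rm hom}$, and strictness transfers to $r_\infty$.

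The main obstacle is the first step: identifying $A_\infty$ as the minimal root rather than merely some root. The heterogeneous $\Phi_G$ need not be concave in general; it is convex-type since $G$ is convex, so $-s_0\tau G$ is concave and $\Phi_G$ is in fact concave. Concavity would give at most two roots, but the ODE-ordering argument is cleaner and avoids relying on it. I will prove the minimality directly from positivity of $\dot A$ and uniqueness of solutions to the Lipschitz scalar ODE, using the Lipschitz property of $G$ established in Section~\ref{sec:model}.
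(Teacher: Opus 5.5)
Your proof is correct, and it follows a different route from the paper's. The paper argues dynamically. Jensen gives $\tau[1-r_0-s_0G(a)]-\gamma a\le \tau[1-r_0-s_0e^{-a}]-\gamma a$ for all $a\ge 0$. The scalar ODE comparison principle then yields $A(t)\le A^{\rm hom}(t)$ for every $t\ge0$, and the inequality is passed to the limit $t\to\infty$. You argue statically on the final-size equation. You first identify $A_\infty$ as the \emph{smallest} positive zero of $\Phi_H$ via the phase-line/uniqueness argument, and then compare roots using $\Phi_G(A_\infty^{\rm hom})\le 0<\Phi_G(0)$ and the intermediate value theorem.

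The paper's route gives more in the non-strict part: it orders the whole trajectories, hence $r(t)\le r^{\rm hom}(t)$ at all times, and it needs no root characterization. Your route gives a cleaner strictness argument. In the paper, strictness is justified only by saying that positive pressure is generated ``so the comparison is strict.'' But $A(t)<A^{\rm hom}(t)$ at finite times does not by itself survive $t\to\infty$. What is needed is that both limits are zeros of their vector fields while $\Phi_G<\Phi_{\rm hom}$ strictly on $(0,\infty)$. Your evaluation $\Phi_G(A_\infty^{\rm hom})<0$ supplies exactly this; it uses $s_0>0$, which is a standing assumption.

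One minor point: your last paragraph is muddled. $\Phi_G$ is concave because $G$ is convex. A concave function with $\Phi_G(0)=\tau\iota_0>0$ has \emph{exactly one} positive zero, not ``at most two.'' Indeed, if $\Phi_G(a_1)=0$ with $a_1>0$, concavity forces $\Phi_G(a)\le -\frac{a-a_1}{a_1}\Phi_G(0)<0$ for $a>a_1$. This would make your first step immediate, though your ODE-uniqueness argument for minimality is also valid.
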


\begin{proof}
For every \(a\geq 0\), the function
\[
\lambda\longmapsto e^{-a\lambda}
\]
is convex. Hence, Jensen's inequality gives
\[
G(a)
=
\E\!\left[e^{-a\Lambda}\right]
\geq
e^{-a\E[\Lambda]}
=
e^{-a}.
\]
Therefore, for every \(a\geq0\),
\[
\tau\left[1-r_0-s_0G(a)\right]-\gamma a
\leq
\tau\left[1-r_0-s_0e^{-a}\right]-\gamma a.
\]
The left-hand side is the vector field describing the derivative of the cumulative infection pressure \(A(t)\) in the heterogeneous model introduced in Theorem~\ref{thm:1}, whereas the right-hand side is the corresponding vector field for the homogeneous model.
 Since both processes
start from
\[
A(0)=A^{\rm hom}(0)=0,
\]
the  ODE comparison principle implies
\[
A(t)\leq A^{\rm hom}(t),
\qquad t\geq0.
\]
Passing to the limit \(t\to\infty\) yields
\[
A_\infty\leq A_\infty^{\rm hom}.
\]
Since
\[
r_\infty
=
r_0+\frac{\gamma}{\tau}A_\infty,
\]
we also obtain
\[
r_\infty\leq r_\infty^{\rm hom}.
\]

If \(\Lambda\) is nondegenerate, then strict convexity gives
\[
G(a)>e^{-a},
\qquad a>0.
\]
When \(\iota_0>0\), positive cumulative infection pressure is generated, so
the comparison is strict and hence
\[
A_\infty<A_\infty^{\rm hom},
\qquad
r_\infty<r_\infty^{\rm hom}.
\]
\end{proof}

\begin{remark}
In relation to Proposition~\ref{thm:final_size}, a word of caution is warranted. Although its conclusion is consistent with \citet[Section~3]{Andersson1998heterogeneity} and has been recognized in the applied probability literature since at least the mid-1980s \citep{Ball1985detStoch}, its validity depends importantly on how the heterogeneous population is compared with its homogeneous counterpart.
In particular, the conclusion may depend on the choice of quantity held fixed across the two populations.

\vspace{.2in}

To illustrate the issue raised in the remark, consider 
for example, instead of matching the mean susceptibility, the matching of the mean Sellke threshold. In the heterogeneous model the threshold 
\(
Q \mid \Lambda \sim \operatorname{Exp}(\Lambda),
\)
so that
\[
\E[Q]=\E[1/\Lambda].
\]
with the equality interpreted in the extended sense when $\Pr(\Lambda=0)>0$. If the homogeneous population is constructed through the Sellke representation with i.i.d. exponential thresholds having constant rate $\lambda^\ast$, then matching the mean threshold gives
\begin{align*}
    \frac{1}{\lambda^\ast}
    = \E[Q]
    = \E[1/\Lambda],
    \qquad\text{hence}\qquad
    \lambda^\ast
    = \frac{1}{\E[1/\Lambda]}.
\end{align*}
Thus $\lambda^\ast$ is the harmonic mean of $\Lambda$, and the corresponding homogeneous incidence term would be
\(
\tau \lambda^\ast s(t)\iota(t)
\)
rather than
\(
\tau \E[\Lambda] s(t)\iota(t).
\)
With this normalization, the homogeneous model may have a substantially smaller final epidemic size than the corresponding heterogeneous model, simply because
\(
1/{\E[1/\Lambda]}
\)
can be much smaller than $\E[\Lambda]$. For distributions having an atom at zero, such as the Poisson example,
\(
\E\!\left[1/{\Lambda}\right]=\infty,
\)
and hence we may take \(\lambda^*=0\). The same occurs when 
\(\Lambda\sim\mathrm{Gamma}(\alpha,\kappa)\) when the shape parameter satisfies
\(\alpha\leq 1\) (but not when  \(\alpha>1\)).

This sensitivity to the comparison criterion is also reflected in the multitype epidemic model with varying susceptibility, corresponding to the finite discrete setting considered here. As shown in \citet[Observation~1]{Andersson1998heterogeneity}, when the disease is sufficiently contagious, the homogeneous model produces the largest total epidemic size, whereas for less contagious diseases a heterogeneous population may produce the larger epidemic. Moreover, if one compares instead the probability of a large outbreak, the homogeneous model is always minimal, irrespective of the degree of contagiousness \cite[p.~656]{Andersson1998heterogeneity}.

Thus Proposition~\ref{thm:final_size} should be interpreted relative to the particular normalization used there, namely comparison at fixed mean susceptibility. More generally, statements that heterogeneity increases or decreases epidemic size are meaningful only after specifying both the quantity being compared and the way in which the heterogeneous and homogeneous populations are matched.
\end{remark}

\subsection{Dynamic selection and effective susceptibility}

The limiting equations have a direct interpretation in the theory of
heterogeneous populations. They describe a law of large numbers with
\emph{dynamic selection}, rather than a static averaging of individual
susceptibilities.

At cumulative infection pressure \(a\), an individual with susceptibility
\(\lambda\) remains susceptible with probability \(e^{-a\lambda}\).
Consequently, the susceptibility distribution among the remaining
susceptibles is
\(
\mu_a(d\lambda) \) given by \eqref{eq:mu_a}. 
The incidence coefficient in the limiting equations is therefore
\(
\overline{\lambda}(a)
\) given by \eqref{eq:la_a}, i.e., 
the mean susceptibility of the population that has not yet been infected.
Individuals with larger susceptibility marks are removed from the susceptible
pool earlier, even though every individual's mark remains fixed.

This mechanism is analogous to the ``ruses'' of heterogeneity 
 in survival populations described by \cite{VaupelYashin1985}. In their
setting, frailer or higher-risk individuals die or exit earlier, so the
composition of the surviving cohort becomes increasingly robust. The
population hazard may consequently decline, flatten, cross another cohort's
hazard, or follow a nonmonotone trajectory even when subgroup hazards are
simple.   According to Proposition~\ref{prop:selection}, in the epidemic setting the corresponding effect is a decline in the
effective susceptibility of the remaining susceptible population. 
The apparent decline in transmission intensity is thus not caused by a change
in any individual's susceptibility. It results from the selective depletion
of individuals with large \(\Lambda\).

The distinction between persistent individual heterogeneity and environmental
stochasticity is also relevant here.
In the present
model, \(\Lambda_j\) is an individual-specific mark assigned once and retained
throughout the susceptible lifetime. A model in which a transmission parameter or
growth rate is repeatedly redrawn over time, either independently across individuals or synchronously as in the case of environmental stochasticity would behave differently (e.g., contrast \cite{mackenzie2025impact} with \cite{Gillespie1973}). Even if the same marginal distributions were used
in the two constructions, the resulting population dynamics would generally
not agree. Persistent heterogeneity creates selection and an evolving
composition, whereas environmental fluctuations act simultaneously on the
whole population and are averaged over time.

This distinction also explains why replacing \(\Lambda\) by
\(\E[\Lambda]\) is generally incorrect beyond the initial phase. Such a
replacement would produce the homogeneous incidence term
\[
\tau\E[\Lambda]s(t)\iota(t),
\]
which assumes that the susceptibility distribution among susceptibles remains
unchanged. The correct limiting incidence is
\[
\tau\overline{\lambda}(A(t))s(t)\iota(t),
\]
where \(\overline{\lambda}(A(t))\leq \E[\Lambda]\) and typically decreases
strictly during the epidemic. The Laplace-transform representation therefore
retains precisely the compositional information that would be lost under
naive mean-field averaging.

There is also a useful conditional interpretation at the individual level.
Given \(\Lambda_j=\lambda\), the Sellke threshold remains
\[
Q_j\mid\{\Lambda_j=\lambda\}\sim\operatorname{Exp}(\lambda).
\]
If
\[
T_j=\inf\{t\geq0:A(t)\geq Q_j\}
\]
denotes the limiting infection time, then
\[
\Pp(T_j>t\mid\Lambda_j=\lambda)
=
e^{-\lambda A(t)}
=
\exp\left\{
-\tau\lambda\int_0^t\iota(u)\,du
\right\}.
\]
Thus, the resistance is exponential on the cumulative-pressure scale, while
the induced infection time is generally nonexponential on calendar time. The
conditional infection hazard is
\[
h_\lambda(t)=\lambda\dot A(t)=\tau\lambda\iota(t).
\]
The deterministic limit in Theorem~\ref{thm:1}  may therefore be viewed as the population-level
aggregation of these conditional survival laws, with the aggregation weights
changing endogenously through selection.

\section{Some susceptibility distributions admitting finite ODE closures}\label{sec:examples}

The general mixed-Sellke limit is determined by the Laplace transform
\[
G(a)=\E[e^{-a\Lambda}]
\]
and by the corresponding tilted mean susceptibility
\[
\overline{\lambda}(a)=-\frac{G'(a)}{G(a)}.
\]
Whenever the relation \(x=G(a)\), with \(x=s/s_0\), can be inverted explicitly,
the cumulative pressure can be eliminated and the model becomes a closed system
in the usual compartment variables.  The following five examples illustrate
several qualitatively different forms of heterogeneity: continuous and
discrete, unimodal and multimodal, with or without a completely resistant
subpopulation (for a thorough comparison between gamma and lognormal forms of heterogeneity, see \cite{mohammed2026exploration}).

Throughout this section, the reduced equations have the form
\[
\dot s=-\tau\psi\!\left(\frac{s}{s_0}\right)s\iota,
\qquad
\dot\iota=\tau\psi\!\left(\frac{s}{s_0}\right)s\iota-\gamma\iota,
\qquad
\dot r=\gamma\iota,
\]
where
\[
\psi(x)=\overline{\lambda}\bigl(G^{-1}(x)\bigr).
\]

\subsection{Gamma: the benchmark power-law model}

Let
\[
\Lambda\sim\operatorname{Gamma}(\alpha,\kappa),
\]
where \(\alpha>0\) is the shape and \(\kappa>0\) is the rate.  Then
\[
G(a)=\left(\frac{\kappa}{\kappa+a}\right)^\alpha,
\qquad
\overline{\lambda}(a)=\frac{\alpha}{\kappa+a}.
\]
Since
\[
\frac{s}{s_0}=\left(\frac{\kappa}{\kappa+A}\right)^\alpha,
\]
we obtain
\[
\psi\!\left(\frac{s}{s_0}\right)
=\frac{\alpha}{\kappa}
\left(\frac{s}{s_0}\right)^{1/\alpha}.
\]
Thus
\[
\dot s
=-\tau\frac{\alpha}{\kappa}
\left(\frac{s}{s_0}\right)^{1/\alpha}s\iota,
\]
\[
\dot\iota
=\tau\frac{\alpha}{\kappa}
\left(\frac{s}{s_0}\right)^{1/\alpha}s\iota
-\gamma\iota.
\]
Equivalently, the incidence is proportional to
\[
s^{1+1/\alpha}\iota.
\]
We consider this model as a standard benchmark because it reproduces the classical
power-law incidence arising in deterministic SIR models with heterogeneous
susceptibility \citep{Novozhilov2008,GomesEtAl2022}. The homogeneous model is
recovered in the limit \(\alpha \to \infty\) while the mean susceptibility
\(\alpha/\kappa\) is held fixed.

\subsection{Poisson: logarithmic incidence and a resistant subpopulation}

Let
\[
\Lambda\sim\operatorname{Poisson}(\theta),
\qquad \theta>0.
\]
The value \(\Lambda=0\) represents complete resistance; equivalently, the
corresponding Sellke threshold is infinite.  The Laplace transform and tilted
mean are
\[
G(a)=\exp\{\theta(e^{-a}-1)\},
\qquad
\overline{\lambda}(a)=\theta e^{-a}.
\]
Writing \(x=s/s_0\),
\[
\log x=\theta(e^{-A}-1),
\]
so that
\[
\psi(x)=\theta+\log x.
\]
The reduced system is therefore
\[
\dot s
=-\tau\left[\theta+\log\left(\frac{s}{s_0}\right)\right]s\iota,
\]
\[
\dot\iota
=\tau\left[\theta+\log\left(\frac{s}{s_0}\right)\right]s\iota
-\gamma\iota.
\]
The dynamically accessible range is
\[
e^{-\theta}\leq \frac{s(t)}{s_0}\leq1,
\]
because
\[
\lim_{a\to\infty}G(a)=\Pp(\Lambda=0)=e^{-\theta}.
\]
Hence at least the fraction \(s_0e^{-\theta}\) of the initially susceptible
population remains uninfected even under arbitrarily large cumulative
infection pressure.  This example gives a simple logarithmic incidence law
and shows that a discrete susceptibility distribution still produces smooth
macroscopic dynamics.

\subsection{Negative-binomial: overdispersion and a difference-of-powers incidence}

Let
\[
\Lambda\sim\operatorname{NegBin}(r,p),
\qquad r>0,\quad 0<p<1,
\]
with probability mass function
\[
\Pp(\Lambda=k)
=\frac{\Gamma(k+r)}{\Gamma(r)k!}p^r(1-p)^k,
\qquad k=0,1,2,\ldots.
\]
Thus,
\[
\E[\Lambda]=\frac{r(1-p)}{p},
\qquad
\operatorname{Var}(\Lambda)=\frac{r(1-p)}{p^2},
\]
so the variance exceeds the mean.  Its Laplace transform is
\[
G(a)
=\left(
\frac{p}{1-(1-p)e^{-a}}
\right)^r,
\]
and
\[
\overline{\lambda}(a)
=\frac{r(1-p)e^{-a}}{1-(1-p)e^{-a}}.
\]
If \(x=s/s_0\), then
\[
x^{-1/r}=\frac{1-(1-p)e^{-A}}{p},
\]
which gives
\[
\psi(x)
=r\left(\frac{x^{1/r}}{p}-1\right).
\]
Consequently,
\[
\dot s
=-\tau r\left[
\frac{1}{p}\left(\frac{s}{s_0}\right)^{1/r}-1
\right]s\iota,
\]
\[
\dot\iota
=\tau r\left[
\frac{1}{p}\left(\frac{s}{s_0}\right)^{1/r}-1
\right]s\iota-\gamma\iota.
\]
The infection term may equivalently be written as the difference of two
powers,
\[
\tau r\left[
\frac{s^{1+1/r}}{p\,s_0^{1/r}}-s
\right]\iota.
\]
This gives a discrete, overdispersed alternative to Poisson heterogeneity.  It
also contains a resistant atom,
\[
\Pp(\Lambda=0)=p^r,
\]
so the accessible susceptible range satisfies
\[
p^r\leq \frac{s(t)}{s_0}\leq1.
\]

\subsection{Finite discrete: arbitrary multimodality}

Let \(\Lambda\) have finite support
\[
\lambda_1,\ldots,\lambda_m\geq 0
\]
with probabilities
\[
p_1,\ldots,p_m>0,
\qquad
\sum_{j=1}^m p_j=1.
\]
Then
\[
G(a)=\sum_{j=1}^m p_j e^{-\lambda_j a}
\]
and
\[
\overline{\lambda}(a)
=
\frac{\sum_{j=1}^m \lambda_j p_j e^{-\lambda_j a}}
{\sum_{j=1}^m p_j e^{-\lambda_j a}}.
\]

Although \(G^{-1}\) generally has no elementary expression, the limiting model
admits an exact finite-dimensional closure by retaining the susceptible fraction
in each susceptibility class. Define
\[
s_j(t)=s_0 p_j e^{-\lambda_j A(t)},
\qquad j=1,\ldots,m.
\]
Then
\[
s(t)=\sum_{j=1}^m s_j(t),
\]
and, since \(\dot A(t)=\tau\iota(t)\),
\[
\dot s_j
=
-\tau\lambda_j s_j\iota,
\qquad j=1,\ldots,m.
\]
Consequently, the limiting epidemic is governed by
\[
\dot s_j
=
-\tau\lambda_j s_j\iota,
\qquad j=1,\ldots,m,
\]
\[
\dot\iota
=
\tau\iota\sum_{j=1}^m \lambda_j s_j
-\gamma\iota,
\qquad
\dot r=\gamma\iota.
\]
Thus the general limit of Theorem~\ref{thm:1} reduces in the finite-support
case to an exact \((m+2)\)-dimensional ODE system. The formulation accommodates
arbitrary multimodality, separated risk classes, and any prescribed collection
of susceptibility atoms. If one or more \(\lambda_j\) are equal to zero, their
combined probability mass corresponds to a completely resistant subpopulation.

This finite discrete case is related to the heterogeneous-contact formulation
of \citet{House2014}, who used a generalized Sellke construction for populations
with discrete contact rates. The interpretation and scope, however, are
different. In \citet{House2014}, the individual mark represents contact activity
and enters the transmission mechanism through heterogeneous mixing, whereas in
the present model the marks represent susceptibility alone and infectiousness
remains homogeneous. Moreover, the finite discrete system above is not the
starting point of our asymptotic argument, but rather a specialization of
Theorem~\ref{thm:1}, which applies to an arbitrary nonnegative susceptibility
distribution with finite mean. Indeed, the multigroup closure follows directly
from the general Laplace-transform representation above,  
with the individual class equations obtained by retaining separately the
finitely many components of the susceptible population.

\subsection{Inverse-Gaussian logarithmic-incidence model}
Let
\[
\Lambda\sim\operatorname{IG}(m,\nu),
\qquad m>0,\quad \nu>0,
\]
where \(m\) is the mean and \(\nu\) is the shape parameter.  Its Laplace
transform is
\[
G(a)
=\exp\left\{
\frac{\nu}{m}
\left(1-\sqrt{1+\frac{2m^2a}{\nu}}\right)
\right\}.
\]
Differentiation gives
\[
\overline{\lambda}(a)
=\frac{m}{\sqrt{1+2m^2a/\nu}}.
\]
Writing again \(x=s/s_0\),
\[
\log x
=\frac{\nu}{m}
\left(1-\sqrt{1+\frac{2m^2A}{\nu}}\right),
\]
and hence
\[
\sqrt{1+\frac{2m^2A}{\nu}}
=1-\frac{m}{\nu}\log x.
\]
Therefore,
\[
\psi(x)
=\frac{m}{1-(m/\nu)\log x}.
\]
The reduced system becomes
\[
\dot s
=-\tau
\frac{m}{1-(m/\nu)\log(s/s_0)}s\iota,
\]
\[
\dot\iota
=\tau
\frac{m}{1-(m/\nu)\log(s/s_0)}s\iota
-\gamma\iota.
\]

This yields a continuous non-Gamma example with a rational-logarithmic
incidence coefficient. We refer to the resulting reduced system as the
\emph{inverse-Gaussian logarithmic-incidence model}. Unlike the power-law
incidence obtained from Gamma susceptibility, the effective susceptibility
depends rationally on \(\log(s/s_0)\).

\vspace{.22in}

\begin{figure}[htbp]
    \centering
    \includegraphics[width=\textwidth]
    {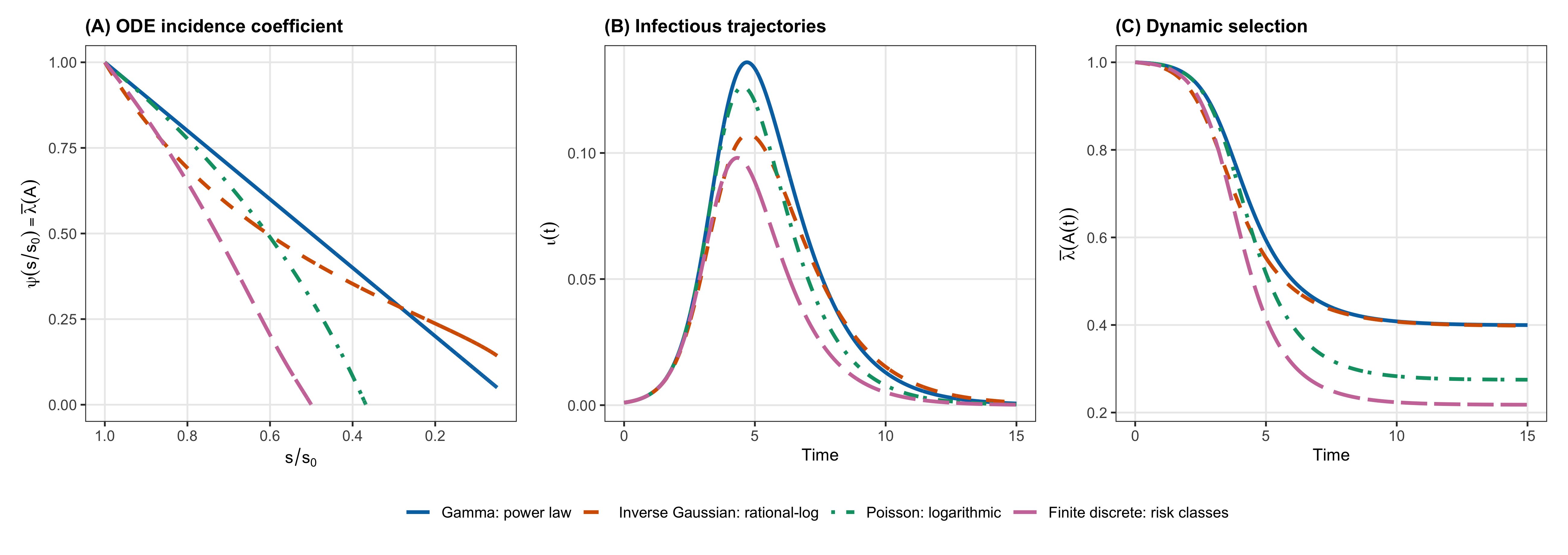}
    \caption{
Comparison of epidemic ODEs induced by four susceptibility distributions, each normalized to have
$\mathbb{E}[\Lambda]=1$ and simulated with the same recovery rate, initial conditions, and initial reproduction number. The following parameters have been used in this figure: $\alpha=1, \kappa=1$ for Gamma distribution, $m=1, \nu=0.5$ for Inverse-Gaussian distribution, $\theta=1$ for Poisson distribution, and $\Pp(\Lambda=0)=0.5, \Pp(\Lambda=1)=0.25$, and $\Pp(\Lambda=3)=0.25$ for the finite discrete distribution. 
\textbf{(A)} The effective susceptibility
$\psi(s/s_0)=\overline{\lambda}(A)$ entering the incidence term
$\tau\psi(s/s_0)s\iota$.
The Gamma distribution yields the benchmark power-law closure, whereas the inverse-Gaussian, Poisson, and finite discrete distributions produce qualitatively different non-power-law incidence functions.
\textbf{(B)} The corresponding infectious trajectories.
Although the models agree initially, dynamic selection generates differences in peak prevalence, peak timing, and epidemic duration.
\textbf{(C)} The mean susceptibility
$\overline{\lambda}(A(t))$ among individuals who remain susceptible.
Its decline reflects the preferential removal of highly susceptible individuals; the limiting value is positive when a resistant or low-risk subgroup remains.
    }
    \label{fig:susceptibility-ode-comparison}
\end{figure}
%\subsection{Comparison of the five examples}

These examples show that the general mixed-Sellke construction produces a broad family of closed epidemic ODEs, rather than only a power-law modification of the homogeneous SIR model. Figure~\ref{fig:susceptibility-ode-comparison} illustrates this point by comparing four representative closures under a common initial mean susceptibility and initial reproduction number. Gamma susceptibility gives the benchmark power-law incidence coefficient and connects directly with the classical heterogeneous-SIR literature. The inverse-Gaussian law provides a continuous non-Gamma example with a rational--logarithmic closure, while Poisson susceptibility produces a logarithmic incidence coefficient together with a resistant atom. A finite discrete law permits arbitrary multimodality and yields an exact class-structured system. As seen in Figure~\ref{fig:susceptibility-ode-comparison}, these different closure functions generate distinct rates of dynamic selection and, consequently, different infectious peak sizes, peak times, and epidemic durations, even though the models agree at the start of the epidemic. The negative-binomial example, not displayed in the figure, further extends the discrete family by allowing overdispersed susceptibility and a difference-of-powers incidence coefficient.

\subsection{Connections with heterogeneous models and applications}

The five examples are analytically closed representatives of broader
modeling traditions; not every exact reduced incidence function appears to
have been used previously under the same parametrization.

The Gamma case has the clearest epidemiological precedent. Novozhilov showed
that Gamma-distributed susceptibility produces a power transmission function
in a heterogeneous SIR model \cite{Novozhilov2008}, and \cite{DiekmannInaba2023}
placed this reduction in a general method for incorporating separable static
heterogeneity into compartmental epidemic models.
Gamma-distributed susceptibility or exposure was also used in analyses of
selection and herd-immunity thresholds for SARS--CoV--2
\citep{GomesEtAl2022,MontalbanEtAl2022}. Thus, the Gamma mixed-Sellke model
provides, within the present framework, an individual-level stochastic foundation
for a nonlinear ODE already used
in COVID--19 and general epidemic modeling.

Finite discrete laws correspond to classical multigroup or risk-class models,
with classes representing different levels of susceptibility associated, for
example, with age, prior immunity, vaccination status, or genetic type.
Related discrete and continuous susceptibility classes have been used to study
prevalence and susceptibility variance in SIS models
\citep{MargheriEtAl2015}. Models based on discrete contact classes and
superspreading have also motivated reduced SIR formulations for COVID--19
\citep{Szapudi2020}, although in such models heterogeneity may act through
contact activity rather than susceptibility alone.

The Poisson and negative-binomial cases are natural when susceptibility is a
count of approximately additive exposure opportunities, contacts, receptors,
or risk factors. Their atom at zero has the interpretation of a completely resistant or immune fraction. Compound-Poisson frailty models were developed precisely to
combine unobserved heterogeneity with a nonsusceptible fraction and include
Gamma and inverse-Gaussian frailty models in a broader family
\citep{WienkeEtAl2010}. The negative-binomial law is especially useful when the
latent count is overdispersed relative to Poisson.

The inverse-Gaussian example connects directly with frailty models, where both
Gamma and inverse-Gaussian mixing laws are standard choices for persistent
unobserved heterogeneity. Their Laplace transforms convert individual
proportional hazards into marginal population hazards and generate the same
selection phenomenon as in Proposition~\ref{prop:selection}
\citep{Aalen1994,BalanPutter2019}.

Beyond epidemics, individual-level distributional heterogeneity is important
in microbial growth. Alonso, Molina, and Theodoropoulos used fitted Gamma
distributions for single-cell division characteristics to connect stochastic
single-cell dynamics with bacterial population growth
\citep{AlonsoEtAl2014}. Although their population equations are not the same as
the SIR reduction here, the modeling principle is parallel: a tractable
individual mixing law yields a low-dimensional population description while
retaining variability lost by replacing all individuals with a single mean.

\section{Summary and Discussion}\label{sec:discussion}

We have shown that a broad class of heterogeneous deterministic SIR models
admits a common stochastic interpretation: these models arise as
law-of-large-numbers limits of finite stochastic epidemics in which individuals
carry fixed susceptibility marks and infection is generated through a
generalized Sellke construction. In the large-population limit, the
resulting epidemic dynamics are determined by the Laplace transform of the
initial susceptibility distribution. This provides an individual-level
probabilistic foundation for a broad class of deterministic heterogeneous
epidemic models that have traditionally been formulated directly at the
population level
\citep{Novozhilov2008,Katriel2009,DiekmannInaba2023}.

One consequence of this construction is a transparent interpretation of
dynamic selection. The nonlinear incidence coefficient is simply the average
susceptibility of those individuals who remain susceptible after the
accumulated infection pressure. Rather than replacing the heterogeneous
population by a single effective transmission rate, the limiting equations
retain the changing composition of the susceptible population throughout the
epidemic. In this way, the selective depletion of highly susceptible
individuals emerges naturally from the underlying stochastic model, providing
a probabilistic counterpart to mechanisms that have long been recognized in
epidemiology and survival analysis
\citep{VaupelYashin1985,GomesEtAl2022,Aalen1994}.

The examples demonstrate that this construction is considerably more general
than the classical Gamma model. Besides recovering the familiar power-law
incidence, they identify several other susceptibility distributions that also
lead to finite-dimensional deterministic systems. This suggests that
tractable heterogeneous compartmental models are closely connected with the
analytic properties of Laplace transforms, rather than with any particular
choice of susceptibility distribution. From this perspective, the search for
closed epidemic models becomes a question of identifying distributions whose
Laplace transforms satisfy suitable differential or algebraic relations.

There are several natural directions in which the present framework may be
extended. We have considered heterogeneity only in susceptibility. Allowing
heterogeneity in infectiousness, contact activity, or infectious-period
duration would require joint marks and could alter both the selection
mechanism and the generation interval. Likewise, although the present paper
focuses on the SIR model, the same construction should extend naturally to
models with additional disease stages, such as SEIR systems. In such an
extension, the susceptibility distribution among individuals who remain
susceptible would evolve according to the same exponential-tilting mechanism
described here, while the exposed compartment would introduce an additional
transition between infection and infectiousness. Related survival-based
reductions for SEIR models have been studied in \cite{KhudaBukhsh2024HowTo},
although a full mixed-Sellke fluid-limit treatment of the heterogeneous SEIR
case is beyond the scope of the present paper. This is consistent with
existing deterministic formulations of static heterogeneity
given in \cite{Novozhilov2008,DiekmannInaba2023,DiekmannHeesterbeekBritton2013}. By
contrast, extending the approach to SIS-type models appears substantially more
difficult. Since recovered individuals return to the susceptible class, the
susceptibility distribution is no longer determined solely by cumulative
infection pressure, and the exponential-tilting representation developed here
does not apply directly. A successful extension would therefore require
tracking the susceptibility distribution across individuals with different
infection histories.

The present work also highlights a close connection between heterogeneous
epidemic models and classical frailty theory. Mathematically, the exponential
tilting of the susceptibility distribution is identical to the selection
mechanism underlying frailty models in survival analysis
\citep{VaupelYashin1985,Aalen1994,WienkeEtAl2010}. The Laplace transform
therefore plays the same unifying role in epidemic dynamics as it does in
frailty theory, linking persistent individual heterogeneity with
population-level behavior.

The deterministic limits derived here may also be useful as a basis for parameter estimation, for instance, via the so-called   
Dynamical Survival Analysis (DSA). The central idea of DSA is to interpret
solutions of population-level epidemic equations in terms of the event-time
distributions of individuals sampled from the population. In particular, the
susceptible trajectory can be viewed as determining the survival distribution
of an initially susceptible individual, while the remaining compartmental
trajectories provide corresponding descriptions of infection, recovery, and
other disease-transition times. This perspective makes it possible to move
from a mechanistic ODE model to likelihood-based inference for individual
event-time data, including incomplete and right-censored observations
\citep{KhudaBukhshEtAl2020,DiLauroEtAl2022,RempalaKhudaBukhsh2023}.

The heterogeneous ODE systems obtained in the present paper broaden this
approach by incorporating persistent individual variation directly into the
population dynamics. Their solutions determine marginal individual survival
functions after averaging over the susceptibility distribution, while also
retaining conditional survival descriptions for individuals with specified
susceptibility marks. The resulting framework could therefore be used to
estimate epidemic parameters jointly with parameters describing population
heterogeneity, compare alternative susceptibility distributions, and assess
how unobserved heterogeneity changes inferred infection risks and epidemic
forecasts. More generally, it illustrates how deterministic limits of
interacting stochastic systems can provide tractable approximations to
individual event-time likelihoods. A related use of this principle in
stochastic reaction networks was recently developed by Ganguly and
KhudaBukhsh, who derived likelihood-based inference procedures from
individual product-formation times using interacting-particle approximations
and propagation-of-chaos arguments
\citep{GangulyKhudaBukhsh2026}.

The principal message of this paper is that heterogeneous compartmental
epidemic models need not be viewed as purely phenomenological deterministic
systems. Instead, many of the heterogeneous models studied in the literature
arise naturally as law-of-large-numbers limits of finite stochastic epidemics
with fixed individual susceptibility marks. The generalized Sellke
construction therefore provides a common stochastic foundation for these
models while also suggesting a systematic route to constructing new
tractable deterministic epidemic models from probabilistic assumptions on
individual susceptibility.

\section*{Statements and Declarations}

\bmhead{Funding}
Olga Izyumtseva was supported by the British Academy through grant number RaR{\textbackslash}100741, and in part by British Academy, Cara, Leverhulme Trust through grant LTRSF24{\textbackslash}100014 and LTRSF26{\textbackslash}100038. Greg Rempala was partially supported by the  HELM Initiative at The Ohio State University.  

% The authors declare that no funds, grants, or other support were received during the preparation of this manuscript.

\bmhead{Competing Interests}
The authors have no relevant financial or non-financial interests to disclose.

\bmhead{Author Contributions}
Author contributions will be specified in the final submitted version.

\bmhead{Data Availability}
No datasets were generated or analyzed for the present theoretical study.

\bibliography{refs}

\appendix

\section{Proof of the general fluid-limit theorem}
\label{app:general-proof}

The proof has three ingredients. First, a Glivenko--Cantelli argument controls
the empirical resistance distribution uniformly over all pressure levels.
Second, a functional law of large numbers controls the recovery process.
Finally, these two estimates reduce the epidemic to an integral equation
with a uniformly vanishing perturbation, to which Gr\"onwall's inequality is
applied.

For the \(S_N(0)\) initially susceptible individuals, define the empirical
survival function
\[
G_N(a)
=
\frac{1}{S_N(0)}
\sum_{j=1}^{S_N(0)}
\ind_{\{Q_j>a\}},
\qquad a\geq 0.
\]

Since \(s_N(0)\xlongrightarrow{\Pp}s_0>0\), we have
\[
S_N(0)\xlongrightarrow{\Pp}\infty.
\]
By the Glivenko--Cantelli theorem with random sample size,
\[
\sup_{a\geq 0}|G_N(a)-G(a)|
\xlongrightarrow{\Pp}0.
\]

The Sellke construction gives the exact identity
\[
s_N(t)=s_N(0)G_N(A_N(t)).
\]
Consequently, for every fixed \(T>0\),
\[
\sup_{0\leq t\leq T}
\left|
s_N(t)-s_0G(A_N(t))
\right|
\xlongrightarrow{\Pp}0.
\]

Next, define
\[
M_N(t)
=
\frac{1}{N}
\left[
Y_R\left(
N\gamma\int_0^t\iota_N(u)\,du
\right)
-
N\gamma\int_0^t\iota_N(u)\,du
\right].
\]
The functional law of large numbers for the Poisson process gives
\[
\sup_{0\leq t\leq T}|M_N(t)|
\xlongrightarrow{\Pp}0.
\]
Therefore
\[
r_N(t)
=
r_N(0)+
\gamma\int_0^t\iota_N(u)\,du+M_N(t).
\]
Since
\[
A_N(t)=\tau\int_0^t\iota_N(u)\,du,
\]
we obtain
\[
r_N(t)
=
r_N(0)+\frac{\gamma}{\tau}A_N(t)+M_N(t).
\]

Using population conservation,
\[
\iota_N(t)=1-s_N(t)-r_N(t),
\]
we find that \(A_N\) satisfies a perturbed integral equation
\[
A_N(t)
=
\int_0^t F(A_N(u))\,du+\eta_N(t),
\]
where
\[
F(a)
=
\tau[1-r_0-s_0G(a)]-\gamma a
\]
and
\[
\eta_N(t)
=
\tau\int_0^t
\Bigl[
(r_0-r_N(0))
+
\bigl(
s_0G(A_N(u))
-
s_N(0)G_N(A_N(u))
\bigr)
-
M_N(u)
\Bigr]\,du .
\]
Then, for every fixed \(T>0\),
\begin{equation*}
\sup_{0\le t\le T}|\eta_N(t)| 
\le
\tau T
\Biggl(
|r_N(0)-r_0|
+
\sup_{0\le u\le T}
\left|
s_N(0)G_N(A_N(u))
-
s_0G(A_N(u))
\right|
+
\sup_{0\leq t\leq T}|M_N(t)|
\Biggr).
\end{equation*}

Moreover,
\[
\begin{aligned}
\sup_{0\le u\le T}
\left|
s_N(0)G_N(A_N(u))
-
s_0G(A_N(u))
\right|
&\le
|s_N(0)-s_0|
+
\sup_{a\ge0}|G_N(a)-G(a)|.
\end{aligned}
\]
and, consequently, 
\[
\sup_{0\le t\le T}|\eta_N(t)| 
\le
\tau T
\left(
|r_N(0)-r_0|
+
|s_N(0)-s_0|
+
\sup_{a\ge0}|G_N(a)-G(a)|
+
\sup_{0\leq t\leq T}|M_N(t)|
\right).
\]
Each term on the right-hand side converges to zero in probability:
the first two by convergence of the initial proportions, the third by the
Glivenko--Cantelli theorem with random sample size, and the last by the
functional law of large numbers for the recovery Poisson process. Hence
\[
\sup_{0\leq t\leq T}|\eta_N(t)|
\xlongrightarrow{\Pp}0.
\]
Because \(G\) is globally Lipschitz,
\[
|F(a)-F(b)|
\leq
\left(\tau s_0\E[\Lambda]+\gamma\right)|a-b|.
\]
Thus, \(F\) is globally Lipschitz. Let \(A\) be the unique solution of
\[
A(t)=\int_0^tF(A(u))\,du.
\]
Then
\[
|A_N(t)-A(t)|
\leq
\sup_{0\leq t\leq T}|\eta_N(t)|
+
L\int_0^t|A_N(u)-A(u)|\,du,
\]
where
\[
L=\tau s_0\E[\Lambda]+\gamma.
\]
By Gr\"onwall's inequality,
\[
\sup_{0\leq t\leq T}|A_N-A|
\leq
e^{LT}\sup_{0\leq t\leq T}|\eta_N(t)|
\xlongrightarrow{\Pp}0.
\]

The convergence of the susceptible process follows from
\[
s_N(t)=s_0G(A_N(t))+o_{\Pp}(1)
\]
uniformly on \([0,T]\) and the Lipschitz continuity of \(G\). The convergence of
the removed process follows from
\[
r_N(t)
=
r_N(0)+\frac{\gamma}{\tau}A_N(t)+M_N(t),
\]
and the infectious process converges because
\(
\iota_N(t)=1-s_N(t)-r_N(t).
\)
This completes the proof.

\end{document}